\newtheorem{theorem}{Theorem}
\newtheorem{corollary}{Corollary}
\newenvironment{proof}
{\begin{trivlist}\item[\hskip%
\labelsep{{\it \noindent Proof.}}]}{\hfill $\square$
\end{trivlist}}
\newcounter{counter}
\newcommand{\counter}{\stepcounter{counter}\thecounter}
\newenvironment{example}
{\begin{trivlist}\item[\hskip%
\labelsep{{\sc \noindent Example.}}]}{\hfill
\end{trivlist}}
\newenvironment{acknowledgements}
{\begin{trivlist}\item[\hskip%
\labelsep{\bf \noindent Acknowledgements.}]}{\hfill
\end{trivlist}}
\numberwithin{equation}{section}
\begin{document}
\begin{center}
{\LARGE \textbf{\!\!\mbox{Strong laws of large numbers for pairwise PQD} \\ random variables: a necessary condition}} \\[25pt]
%\vspace{1.5cm}
{\Large Jo\~{a}o Lita da Silva\footnote{\textit{E-mail address:} \texttt{jfls@fct.unl.pt; joao.lita@gmail.com}}} \\
\vspace{0.1cm}
\textit{Department of Mathematics and GeoBioTec \\ Faculty of Sciences and Technology \\
NOVA University of Lisbon \\ Quinta da Torre, 2829-516 Caparica,
Portugal}
\end{center}

%\vspace{1.5cm}

\bigskip

\begin{abstract}
A necessary condition is given for a sequence of identically distributed and pairwise positively quadrant dependent random variables obeying the strong laws of large numbers with respect to the normalising constants $n^{1/p}$ $(1 \leqslant p < 2)$.
\end{abstract}

\bigskip

{\textit{Key words and phrases:} Strong law of large numbers, positively quadrant dependent random variables, (positively) associated random variables}

\bigskip

{\small{\textit{2010 Mathematics Subject Classification:} 60F15}}

\bigskip

\section{A necessary condition}

\indent

Although the idea of positive dependence traces back to the forties by way of particular examples and without explicit referral to any dependent structure (see \citealt{Hoeffding48}; \citealt{Lehmann51} or \citealt{Blum61}), formal definitions only emerged in \citealt{Esary67} and \citealt{Lehmann66}. Throughout, we shall focus on the concepts of (positive) association and positive quadrant dependence, whose definitions can be found in the previous two references.

A finite sequence of random variables $\{X_{1},\ldots, X_{n} \}$ is said to be \emph{(positively) associated} if for coordinatewise nondecreasing functions $f$ and $g$ defined on $\mathbf{R}^{n}$,
\begin{equation*}
\mathrm{Cov} \big(f (X_{1},\ldots, X_{n}), g(X_{1},\ldots, X_{n}) \big) \geqslant 0
\end{equation*}
whenever the covariance is defined. A sequence $\{X_{n}, n \geqslant 1\}$ is said to be (positively) associated if every finite subfamily is (positively) associated. A sequence $\{X_{n}, \, n \geqslant 1 \}$ of random variables is said to be \emph{pairwise positively quadrant dependent} (pairwise PQD, in short) if
\begin{equation*}
\mathbb{P} \left\{X_{k} \leqslant x_{k}, X_{j} \leqslant x_{j}  \right\} - \mathbb{P} \left\{X_{k} \leqslant x_{k} \right\} \mathbb{P} \left\{X_{j} \leqslant x_{j}  \right\} \geqslant 0
\end{equation*}
for all reals $x_{k}, x_{j}$ and all positive integers $k,j$ such that $k \neq j$. Concerning elementary properties involving the aforementioned concepts, we refer the reader to \citealt{Esary67} and \citealt{Lehmann66}.

Associated to a probability space $(\Omega, \mathcal{F}, \mathbb{P})$, we shall consider the space $\mathscr{L}_{p}$ $(p>0)$ of all measurable functions $X$ (necessarily random variables) for which $\mathbb{E} \lvert X \rvert^{p} < \infty$. Given an event $A$ we shall denote the indicator random variable of the event $A$ by $I_{A}$. %As usual, the letter $C$ stands for any constant that can be explicitly computed. The exact value of $C$ may therefore change in each appearance.

Over the last decades, some authors have established strong laws of large numbers for positively dependent random variables (see, for instance, \citealt{Birkel89}; \citealt{Chen19}; \citealt{Louhichi00} or \citealt{Lita18a}). Recently, Chen and Sung (\citealt{Chen19}) considered a sequence $\{X_{n}, \, n \geqslant 1 \}$ of (positively) associated random variables stochastically dominated by a random variable $X \in \mathscr{L}_{p}$, $1 \leqslant p < 2$ (i.e. for which there exists a constant $C>0$ such that $\sup_{n} \mathbb{P}\{\lvert X_{n} \rvert > t \} \leqslant C \mathbb{P} \{\lvert X \rvert > t \}$ for each $t>0$ and some random variable $X \in \mathscr{L}_{p}$, $1 \leqslant p < 2$), proving that if the following covariance condition
\begin{equation}\label{eq:1.1}
\sum_{1 \leqslant k < j \leqslant \infty} j^{-2/p} G_{X_{k},X_{j}} \big(k^{1/p},j^{1/p} \big) < \infty
\end{equation}
holds, where $G_{X_{k},X_{j}}(u,v) := \int_{-v}^{v} \int_{-u}^{u} \big[\mathbb{P} \{X_{k} > x, X_{j} > y \} - \mathbb{P} \{X_{k} > x \} \mathbb{P} \{X_{j} > y \}\big] \, \mathrm{d}x \mathrm{d}y$, then $\sum_{k=1}^{n} (X_{k} - \mathbb{E} \, X_{k})/n^{1/p} \overset{\textnormal{a.s.}}{\longrightarrow} 0$.

On the other hand, from Remark 1 of \citealt{Lita18b}, we can state that if $\{X_{n}, \, n \geqslant 1 \}$ is a sequence of pairwise PQD random variables stochastically dominated by a random variable $X \in \mathscr{L}_{1}$ satisfying $\sum_{1 \leqslant k < j \leqslant \infty} j^{-2} G_{X_{k},X_{j}}(k,j) < \infty$ then $\sum_{k=1}^{n} (X_{k} - \mathbb{E} \, X_{k})/n \overset{\textnormal{a.s.}}{\longrightarrow} 0$ (see also Theorem 3 of \citealt{Chen19}).

Our purpose in this short note is to provide a necessary condition for a sequence $\{X_{n}, \, n \geqslant 1 \}$ of identically distributed and pairwise PQD random variables to satisfy $\sum_{k=1}^{n} (X_{k} - \mathbb{E} \, X_{k})/n^{1/p} \overset{\textnormal{a.s.}}{\longrightarrow} 0$. During the whole of this paper, we shall define $\Delta_{k,j}(x,y) := \mathbb{P} \{X_{k} > x, X_{j} > y \} - \mathbb{P} \{X_{k} > x \} \mathbb{P} \{X_{j} > y \}$.

\begin{theorem}\label{thr:1}
Let $1 \leqslant p < 2$ and $\{X_{n}, \, n \geqslant 1 \}$ be a sequence of identically distributed pairwise PQD random variables. If $\sum_{k=1}^{n} (X_{k} - c)/n^{1/p} \overset{\textnormal{a.s.}}{\longrightarrow} 0$ for some finite constant $c$, and
\begin{equation}\label{eq:1.2}
\sum_{1 \leqslant k < j \leqslant \infty} (kj)^{-1/p} G_{X_{k},X_{j}} \big(k^{1/p},j^{1/p} \big) < \infty
\end{equation}
then $\sum_{k=1}^{\infty} \mathbb{P} \left\{\lvert X_{1} \rvert > k^{1/p} \right\} < \infty$.
\end{theorem}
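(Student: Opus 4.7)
The proof naturally splits into two parts: a telescoping reduction to an almost-sure pointwise bound on $X_{n}$, and a Borel--Cantelli-type converse exploiting the covariance hypothesis \eqref{eq:1.2} to compensate for the lack of independence. First, writing $S_{n} = X_{1} + \cdots + X_{n}$ and decomposing
\[
\frac{X_{n}}{n^{1/p}} = \frac{S_{n} - nc}{n^{1/p}} - \left(\frac{n-1}{n}\right)^{\!1/p}\frac{S_{n-1} - (n-1)c}{(n-1)^{1/p}} + \frac{c}{n^{1/p}},
\]
the hypothesized a.s.\ convergence of both normalized partial sums together with $c/n^{1/p} \to 0$ yields $X_{n}/n^{1/p} \to 0$ almost surely. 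Consequently, $A_{n} := \{|X_{n}| > n^{1/p}\}$ occurs only finitely often a.s., so $\mathbb{P}\{A_{n}\text{ i.o.}\} = 0$.

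Since $\{|X_{k}| > k^{1/p}\} = \{X_{k} > k^{1/p}\} \cup \{-X_{k} > k^{1/p}\}$ and $\{-X_{n}\}$ is itself pairwise PQD, identically distributed, and obeys the hypothesized SLLN with centering $-c$, it suffices to prove $\sum_{k}\mathbb{P}\{X_{k} > k^{1/p}\} < \infty$. Set $B_{k} := \{X_{k} > k^{1/p}\}$; since $I_{B_{k}}$ is a nondecreasing function of $X_{k}$, pairwise PQD gives $\mathbb{P}(B_{j} \cap B_{k}) - \mathbb{P}(B_{j})\mathbb{P}(B_{k}) = \Delta_{j,k}(j^{1/p}, k^{1/p}) \geqslant 0$. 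Arguing by contradiction, suppose $\sum_{k}\mathbb{P}(B_{k}) = \infty$. The Chung--Erd\H{o}s inequality, applied to the tail $\{B_{k}\}_{N \leqslant k \leqslant M}$, gives
\[
\mathbb{P}\!\left(\bigcup_{N \leqslant k \leqslant M} B_{k}\right) \geqslant \frac{E_{N,M}^{2}}{E_{N,M} + E_{N,M}^{2} + 2\!\!\sum_{N \leqslant j < k \leqslant M}\!\!\Delta_{j,k}(j^{1/p}, k^{1/p})},
\]
where $E_{N,M} := \sum_{N \leqslant k \leqslant M}\mathbb{P}(B_{k}) \to \infty$ as $M \to \infty$. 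If one can establish that $\sum_{1 \leqslant j < k < \infty}\Delta_{j,k}(j^{1/p}, k^{1/p}) < \infty$, the right-hand side tends to $1$, forcing $\mathbb{P}(B_{k}\text{ i.o.}) = 1$ and contradicting the first part.

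The principal obstacle is therefore extracting the finiteness of the pointwise covariance sum $\sum_{j < k}\Delta_{j,k}(j^{1/p}, k^{1/p})$ from the integrated condition \eqref{eq:1.2}, which only controls the double integrals $G_{X_{j}, X_{k}}(j^{1/p}, k^{1/p})$. Under pairwise PQD alone, $\Delta_{j,k}$ need not be monotone on the integration box, so the corner value cannot be pulled directly out of the average. The plan here is to exploit the identical distribution hypothesis together with a mild inflation of the threshold---for instance, replacing $B_{k}$ by $\{X_{k} > (k/2)^{1/p}\}$ (whose probabilities have the same divergence character) and using the change of variables $x = k^{1/p}u$, $y = j^{1/p}v$ in the definition of $G_{X_{j}, X_{k}}$ to rewrite \eqref{eq:1.2} as $\int_{-1}^{1}\!\int_{-1}^{1}\sum_{j<k}\Delta_{j,k}(j^{1/p}u, k^{1/p}v)\,du\,dv < \infty$, recovering pointwise information from the almost-everywhere finiteness of the inner sum by a monotonicity or scaling argument particular to identically distributed marginals. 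This transfer from integrated to pointwise covariance is where I expect the delicate analysis to reside.
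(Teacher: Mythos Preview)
Your overall architecture matches the paper's: the telescoping reduction to $X_{n}/n^{1/p}\to 0$ a.s., the reduction to one tail via the pairwise PQD of $\{-X_{n}\}$, and the contradiction argument through a Chung--Erd\H{o}s/Kochen--Stone (in the paper, R\'enyi--Lamperti) converse Borel--Cantelli lemma are all exactly what the paper does. The gap is precisely where you flag it, and your proposed fix does not close it.

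You aim to deduce $\sum_{j<k}\Delta_{j,k}(j^{1/p},k^{1/p})<\infty$ from \eqref{eq:1.2}. After your change of variables this reads $\int_{-1}^{1}\int_{-1}^{1}\sum_{j<k}\Delta_{j,k}(j^{1/p}u,k^{1/p}v)\,du\,dv<\infty$, which by Tonelli (using $\Delta\geqslant 0$) yields finiteness of the inner sum only for \emph{almost every} $(u,v)$. Your threshold inflation to $(k/2)^{1/p}$ asks for the value at the single point $(u,v)=(2^{-1/p},2^{-1/p})$, and no ``monotonicity or scaling argument particular to identically distributed marginals'' recovers a fixed diagonal point from an a.e.\ statement, because $\Delta_{j,k}$ is genuinely non-monotone. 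In fact the pointwise conclusion $\sum_{j<k}\Delta_{j,k}(j^{1/p},k^{1/p})<\infty$ may simply be stronger than what \eqref{eq:1.2} implies, so this target is the wrong one.

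The paper sidesteps the issue by never isolating $\Delta$ at a point. It bounds $\mathbb{P}(B_{k}\cap B_{j})$ itself via the monotone quantity $\mathbb{P}\{X_{k}>x,\,X_{j}>y\}$: for any $\varepsilon>1$,
\[
\mathbb{P}(B_{k}\cap B_{j})\leqslant\Bigl(\tfrac{\varepsilon}{\varepsilon-1}\Bigr)^{2}(kj)^{-1/p}\int_{j^{1/p}/\varepsilon}^{j^{1/p}}\!\int_{k^{1/p}/\varepsilon}^{k^{1/p}}\mathbb{P}\{X_{k}>x,\,X_{j}>y\}\,dx\,dy,
\]
and only \emph{then} splits the integrand as $\Delta_{j,k}(x,y)+\mathbb{P}\{X_{k}>x\}\mathbb{P}\{X_{j}>y\}$. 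The $\Delta$-integral over the sub-box is $\leqslant G_{X_{k},X_{j}}(k^{1/p},j^{1/p})$ by nonnegativity, so its contribution to $\sum_{j,k}\mathbb{P}(B_{k}\cap B_{j})$ is bounded by a constant times the series in \eqref{eq:1.2}. The product term contributes $\mathbb{P}\{X_{1}>k^{1/p}/\varepsilon\}\,\mathbb{P}\{X_{1}>j^{1/p}/\varepsilon\}$, and an elementary comparison using identical distribution gives
\[
\liminf_{n\to\infty}\frac{\sum_{j,k=1}^{n}\mathbb{P}(B_{j}\cap B_{k})}{\bigl(\sum_{k=1}^{n}\mathbb{P}(B_{k})\bigr)^{2}}\leqslant\varepsilon^{2p}.
\]
Letting $\varepsilon\downarrow 1$ forces this $\liminf$ to equal $1$, and R\'enyi--Lamperti then gives $\mathbb{P}(B_{k}\ \text{i.o.})=1$, the desired contradiction. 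The moral: average the \emph{joint survival function} (which is monotone) rather than $\Delta$ (which is not), and aim only for the ratio bound required by the converse Borel--Cantelli lemma, not for pointwise summability of the covariances.
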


\begin{proof}
Firstly, if $\sum_{k=1}^{n} (X_{k} - c)/n^{1/p} \overset{\textnormal{a.s.}}{\longrightarrow} 0$ then
\begin{equation}\label{eq:1.3}
\frac{X_{n}}{n^{1/p}} = \frac{X_{n} - c}{n^{1/p}} + \frac{c}{n^{1/p}} = \frac{\sum_{k=1}^{n}(X_{k} - c)}{n^{1/p}} - \frac{(n - 1)^{1/p}}{n^{1/p}} \frac{\sum_{k=1}^{n-1} (X_{k} - c)}{(n - 1)^{1/p}} + \frac{c}{n^{1/p}} \overset{\textnormal{a.s.}}{\longrightarrow} 0.
\end{equation}
Arguing by contradiction, let us assume that $\sum_{k=1}^{\infty} \mathbb{P} \left\{X_{1} > k^{1/p} \right\} = \infty$. Supposing $A_{k} := \left\{X_{k} > k^{1/p} \right\}$ and fixing an $\varepsilon > 1$, we have for every $k \neq j$,
\begin{align*}
\int_{j^{1/p}/\varepsilon}^{j^{1/p}} \int_{k^{1/p}/\varepsilon}^{k^{1/p}} \mathbb{P} \left\{X_{k} > x, X_{j} > y \right\} \mathrm{d}x \mathrm{d}y &\geqslant \int_{j^{1/p}/\varepsilon}^{j^{1/p}} \int_{k^{1/p}/\varepsilon}^{k^{1/p}} \mathbb{P} (A_{k} \cap A_{j}) \, \mathrm{d}x \mathrm{d}y \\
&= k^{1/p} j^{1/p} \left(\frac{\varepsilon - 1}{\varepsilon} \right)^{2} \mathbb{P} (A_{k} \cap A_{j})
\end{align*}
and
\begin{align*}
&\mathbb{P} (A_{k} \cap A_{j}) \\
& \; \; \leqslant \left(\frac{\varepsilon}{\varepsilon - 1} \right)^{2} \frac{1}{k^{1/p} j^{1/p}} \int_{j^{1/p}/\varepsilon}^{j^{1/p}} \int_{k^{1/p}/\varepsilon}^{k^{1/p}} \mathbb{P} \left\{X_{k} > x, X_{j} > y \right\} \mathrm{d}x \mathrm{d}y \\
&\; \; = \left(\frac{\varepsilon}{\varepsilon - 1} \right)^{2} \frac{1}{k^{1/p} j^{1/p}} \int_{j^{1/p}/\varepsilon}^{j^{1/p}} \int_{k^{1/p}/\varepsilon}^{k^{1/p}} \left[\Delta_{X_{k},X_{j}}(x,y) + \mathbb{P} \left\{X_{k} > x \right\} \mathbb{P} \left\{ X_{j} > y \right\} \right] \mathrm{d}x \mathrm{d}y \\
&\; \; \leqslant \left(\frac{\varepsilon}{\varepsilon - 1} \right)^{2}  \frac{1}{k^{1/p} j^{1/p}} \int_{j^{1/p}/\varepsilon}^{j^{1/p}} \int_{k^{1/p}/\varepsilon}^{k^{1/p}}  \Delta_{X_{k},X_{j}}(x,y) \, \mathrm{d}x \mathrm{d}y + \mathbb{P} \left\{X_{1} > \frac{k^{1/p}}{\varepsilon} \right\} \mathbb{P} \left\{X_{1} > \frac{j^{1/p}}{\varepsilon} \right\}.
\end{align*}
Thus,
\begin{align*}
\frac{\sum_{k,j=1}^{n} \mathbb{P} (A_{k} \cap A_{j})}{\left[\sum_{k=1}^{n} \mathbb{P}(A_{k}) \right]^{2}} &\leqslant 2 \left(\frac{\varepsilon}{\varepsilon - 1} \right)^{2} \cdot \frac{\sum_{1 \leqslant k < j \leqslant n} (kj)^{-1/p} \int_{j^{1/p}/\varepsilon}^{j^{1/p}} \int_{k^{1/p}/\varepsilon}^{k^{1/p}} \Delta_{X_{k},X_{j}}(x,y) \, \mathrm{d}x \mathrm{d}y}{\left(\sum_{k=1}^{n} \mathbb{P}\{X_{1} > k^{1/p} \} \right)^{2}} \\
& \quad + \frac{1}{\sum_{k=1}^{n} \mathbb{P}\{X_{1} > k^{1/p} \}} + \left(\frac{\sum_{k=1}^{n} \mathbb{P} \left\{\varepsilon X_{1} > k^{1/p} \right\}}{\sum_{k=1}^{n} \mathbb{P}\{X_{1} > k^{1/p} \} }\right)^{2}.
\end{align*}
From
\begin{align*}
1 \leqslant & \frac{\sum_{k=1}^{n} \mathbb{P} \left\{\varepsilon X_{1} > k^{1/p} \right\}}{\sum_{k=1}^{n} \mathbb{P}\{X_{1} > k^{1/p} \}} \leqslant \frac{\int_{0}^{n} \mathbb{P} \left\{\varepsilon X_{1} > t^{1/p} \right\} \mathrm{d}t}{\sum_{k=1}^{n} \mathbb{P}\{X_{1} > k^{1/p} \} } = \frac{\varepsilon^{p} \int_{0}^{n/\varepsilon^{p}} \mathbb{P} \left\{X_{1} > s^{1/p} \right\} \mathrm{d}s}{\sum_{k=1}^{n} \mathbb{P}\{X_{1} > k^{1/p} \} } \\
&\leqslant \frac{\varepsilon^{p} \sum_{k=0}^{\left\lceil \frac{n}{\varepsilon^{p}} \right\rceil - 1} \mathbb{P} \left\{X_{1} > k^{1/p} \right\}}{\sum_{k=1}^{n} \mathbb{P}\{X_{1} > k^{1/p} \} } \leqslant \frac{\varepsilon^{p} \sum_{k=0}^{n} \mathbb{P} \left\{X_{1} > k^{1/p} \right\}}{\sum_{k=1}^{n} \mathbb{P}\{X_{1} > k^{1/p} \} } \leqslant \frac{\varepsilon^{p}}{\sum_{k=1}^{n} \mathbb{P}\{X_{1} > k^{1/p} \}} + \varepsilon^{p},
\end{align*}
where $\lceil u \rceil$ denotes the smallest integer not less than $u$, we get
\begin{equation*}
\ell := \liminf_{n \rightarrow \infty} \frac{\sum_{k,j=1}^{n} \mathbb{P} (A_{k} \cap A_{j})}{\left[\sum_{k=1}^{n} \mathbb{P}(A_{k}) \right]^{2}} \leqslant \varepsilon^{2p}.
\end{equation*}
Since the latter inequality is true for any $\varepsilon > 1$, we have $\ell \leqslant 1$. According to R\'{e}nyi--Lamperti's lemma (see, for instance, \citealt{Chandra08}), we obtain $\ell = 1$ and $\mathbb{P} (\limsup_{k \rightarrow \infty} A_{k})$ $= 1$ which contradicts \eqref{eq:1.3}. Therefore,
\begin{equation}\label{eq:1.4}
\sum_{k=1}^{\infty} \mathbb{P} \big\{X_{1} > k^{1/p} \big\} < \infty.
\end{equation}
It remains to prove that
\begin{equation}\label{eq:1.5}
\sum_{k=1}^{\infty} \mathbb{P} \big\{X_{k} < -k^{1/p} \big\} < \infty.
\end{equation}
By virtue of $\Delta_{k,j}(x,y) = \mathbb{P} \{X_{k} \leqslant x, X_{j} \leqslant y \} - \mathbb{P} \{X_{k} \leqslant x \} \mathbb{P} \{X_{j} \leqslant y \}$ for all $x,y \in \mathbf{R}$, it follows for any $k \neq j$ and each fixed $\varepsilon > 1$,
\begin{align*}
&\int_{-j^{1/p}}^{-j^{1/p}/\varepsilon} \int_{-k^{1/p}}^{-k^{1/p}/\varepsilon} \Delta_{X_{k},X_{j}}(x,y) \, \mathrm{d}x \mathrm{d}y \\
& \quad = \int_{-j^{1/p}}^{-j^{1/p}/\varepsilon} \int_{-k^{1/p}}^{-k^{1/p}/\varepsilon} \big[\mathbb{P} \{X_{k} \leqslant x, X_{j} \leqslant y \} - \mathbb{P} \{X_{k} \leqslant x \} \mathbb{P} \{X_{j} \leqslant y \} \big] \mathrm{d}x \mathrm{d}y \\
& \quad \geqslant \left(\frac{\varepsilon - 1}{\varepsilon} \right)^{2} k^{1/p} j^{1/p} \mathbb{P} \left\{X_{k} \leqslant -k^{1/p}, X_{j} \leqslant -j^{1/p} \right\} \\
&\qquad - \left(\frac{\varepsilon - 1}{\varepsilon} \right)^{2} k^{1/p} j^{1/p} \mathbb{P} \left\{X_{1} \leqslant -\frac{k^{1/p}}{\varepsilon} \right\} \mathbb{P} \left\{X_{1} \leqslant -\frac{j^{1/p}}{\varepsilon} \right\}
\end{align*}
i.e., by setting $B_{k} := \left\{X_{k} \leqslant -k^{1/p} \right\}$ we have
\begin{align*}
\mathbb{P} (B_{k} \cap B_{j}) &\leqslant \left(\frac{\varepsilon}{\varepsilon - 1} \right)^{2} \frac{1}{k^{1/p} j^{1/p}} \int_{-j^{1/p}}^{-j^{1/p}/\varepsilon} \int_{-k^{1/p}}^{-k^{1/p}/\varepsilon} \Delta_{X_{k},X_{j}}(x,y) \, \mathrm{d}x \mathrm{d}y \\
&\qquad + \mathbb{P} \left\{X_{1} \leqslant -\frac{k^{1/p}}{\varepsilon} \right\} \mathbb{P} \left\{X_{1} \leqslant -\frac{j^{1/p}}{\varepsilon} \right\}.
\end{align*}
Assuming $\sum_{k=1}^{\infty} \mathbb{P} \left\{X_{1} \leqslant -k^{1/p} \right\} = \infty$ and following exactly the same steps as before, one can conclude that
\begin{equation*}
\liminf_{n \rightarrow \infty} \frac{\sum_{k,j=1}^{n} \mathbb{P} (B_{k} \cap B_{j})}{\left[\sum_{k=1}^{n} \mathbb{P}(B_{k}) \right]^{2}} = 1,
\end{equation*}
and whence $\sum_{k=1}^{\infty} \mathbb{P} \big\{X_{1} \leqslant -k^{1/p} \big\} < \infty$ entailing \eqref{eq:1.5}. Thus, \eqref{eq:1.4} and \eqref{eq:1.5} establish the thesis.
\end{proof}

Next, we present a sequence of identically distributed and pairwise PQD random variables which satisfies \eqref{eq:1.2}.

\begin{example}
Let $1 \leqslant p < 2$ and $\{X_{n}, \, n \geqslant 1\}$ a sequence of identically distributed random variables such that, for all $k \neq j$, $(X_{k},X_{j})$ has bivariate distribution,
\begin{equation*}
F_{X_{k},X_{j}}(x,y) = F_{X_{k}}(x) F_{X_{j}}(y) + \theta_{k,j} F_{X_{k}}^{s_{k,j}}(x) F_{X_{j}}^{s_{k,j}}(y) \big[1 - F_{X_{k}}(x) \big]^{r_{k,j}} \big[1 - F_{X_{j}}(y) \big]^{r_{k,j}}
\end{equation*}
where $r_{k,j}, s_{k,j} \geqslant 1$ and $0 \leqslant \theta_{k,j} \leqslant 1$ for all $k,j$. Thus, $\{X_{n}, \, n \geqslant 1 \}$ is a pairwise PQD sequence (see, for instance, \citealt{Lai00}). Considering $X_{1}$ having probability density function $f_{X_{1}}(t) = 2t^{-3} I_{(1,\infty)}(t)$, it follows $\mathbb{E} \, \lvert X_{1} \rvert^{p} = 2/(2 - p)$ and $\mathbb{E} \, X_{1}^{2} = \infty$. After standard computations, we obtain for $u,v>1$
\begin{align*}
G_{X_{k},X_{j}}(u,v) &= \theta_{k,j} \left[\frac{s_{k,j} \Gamma(s_{k,j}) \Gamma(r_{k,j} + 1/2)}{(2r_{k,j} - 1) \Gamma(r_{k,j} + s_{k,j} + 1/2)} - \frac{H\left(-s_{k,j},r_{k,j} - 1/2;r_{k,j} + 1/2;1/u^{2} \right)}{(2r_{k,j} - 1)u^{2r_{k,j} - 1}} \right] \\
&\qquad \cdot \left[\frac{s_{k,j} \Gamma(s_{k,j}) \Gamma(r_{k,j} + 1/2)}{(2r_{k,j} - 1) \Gamma(r_{k,j} + s_{k,j} + 1/2)} - \frac{H\left(-s_{k,j},r_{k,j} - 1/2;r_{k,j} + 1/2;1/v^{2} \right)}{(2r_{k,j} - 1)v^{2r_{k,j} - 1}} \right]
\end{align*}
where $\Gamma(z) := \int_{0}^{\infty} t^{z-1} \mathrm{e}^{-t} \mathrm{d}t$, $\mathrm{Re}(z) > 0$, is the gamma function and $H(a,b;c;z) := \sum_{n=0}^{\infty} \frac{(a)_{n} (b)_{n}}{(c)_{n} n!} z^{n}$, $c \neq 0,-1,-2,\ldots$ denotes the hypergeometric function (here, $(a)_{n} := a(a + 1) \ldots (a + n - 1)$ stands for Pochhammer symbol). For instance, supposing $r_{k,j} = r$, $s_{k,j} = s$, and $\theta_{k,j} = k^{\mu} j^{\nu}$ with $\mu, \nu$ verifying $1/p - 1 < \mu < 2/p - 2 - \nu$ we have $G_{X_{k},X_{j}}(k^{1/p},j^{1/p}) \leqslant C \theta_{k,j}$ with $C$ not depending on $k,j$ yielding
\begin{equation*}
\sum_{1 \leqslant k < j \leqslant \infty} (kj)^{-1/p} G_{X_{k},X_{j}}(k^{1/p},j^{1/p}) \leqslant C \sum_{j=2}^{\infty} j^{\nu - 1/p} \sum_{k=1}^{j-1} k^{\mu - 1/p} \leqslant C \sum_{j=2}^{\infty} j^{\mu + \nu - 2/p + 1} < \infty.
\end{equation*}
Similar examples can be obtained considering $X_{1} \in \mathscr{L}_{p}$ and random pairs $(X_{k},X_{j})$, $k \neq j$ having bivariate distribution
\begin{equation*}
F_{X_{k},X_{j}}(x,y) = F_{X_{k}}(x) F_{X_{j}}(y) + \theta_{k,j} \phi_{k,j}\big[F_{X_{k}}(x) \big] \psi_{k,j}\big[F_{X_{j}}(y) \big]
\end{equation*}
by choosing suitable nonnegative absolutely continuous functions $\phi_{k,j}, \psi_{k,j}$ on $[0,1]$ satisfying, for each $k,j$, $\phi_{k,j}(0) = \phi_{k,j}(1) = \psi_{k,j}(0) = \psi_{k,j}(1) = 0$, and
\begin{equation*}
0 \leqslant \theta_{k,j} \leqslant -\frac{1}{\min\big\{\alpha_{k,j} \delta_{k,j}, \beta_{k,j} \gamma_{k,j} \big\}},
\end{equation*}
where $\alpha_{k,j} := \inf \big\{\phi_{k,j}'(t) \colon t \in A \big\} < 0$, $\beta_{k,j} := \sup \big\{\phi_{k,j}'(t) \colon t \in A \big\} > 0$, $\gamma_{k,j} := \inf \big\{\psi_{k,j}'(t) \colon t \in B \big\} < 0$ and $\delta_{k,j} := \inf \big\{\psi_{k,j}'(t) \colon t \in B \big\} > 0$, with $A = \big\{t \in [0,1]\colon \phi_{k,j}'(t) \; \text{exists} \big\}$ and $B = \big\{t \in [0,1]\colon \psi_{k,j}'(t) \; \text{exists} \big\}$ (see Corollary 2.4 of \citealt{Rodriguez-Lallena04}).
\end{example}

By gathering Theorem 4 of \citealt{Chen19} with above Theorem~\ref{thr:1}, we can announce a Marcinkiewicz--Zygmund strong law of large numbers for (positively) associated and identically distributed random variables.

\begin{corollary}\label{cor:1}
Let $1 \leqslant p < 2$ and $\{X_{n}, \, n \geqslant 1 \}$ be a sequence of identically distributed and (positively) associated random variables satisfying condition \eqref{eq:1.2}. Then, $\mathbb{E} \lvert X_{1} \rvert^{p} < \infty$ if and only if $\sum_{k=1}^{n} (X_{k} - \mathbb{E} \, X_{1})/n^{1/p} \overset{\textnormal{a.s.}}{\longrightarrow} 0$.
\end{corollary}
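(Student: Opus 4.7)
The plan is to obtain the corollary by gluing together two results already stated in the paper: the sufficient condition of Chen--Sung (Theorem 4 of Chen19) for the ``if'' direction, and the just-proved Theorem~\ref{thr:1} for the ``only if'' direction. The bulk of the work is checking that the hypotheses of these two theorems actually hold under the assumptions of the corollary.

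For the ``if'' direction, I would start from $\mathbb{E}\lvert X_{1} \rvert^{p} < \infty$. Since the $X_{n}$ are identically distributed, the stochastic domination hypothesis of Theorem 4 of Chen19 holds trivially with $X := X_{1} \in \mathscr{L}_{p}$ and constant $C = 1$. Next I would verify that \eqref{eq:1.2} implies \eqref{eq:1.1}: on the index set $\{1 \leqslant k < j\}$ one has $kj \leqslant j^{2}$, so $(kj)^{-1/p} \geqslant j^{-2/p}$, and $G_{X_{k},X_{j}}(k^{1/p},j^{1/p}) \geqslant 0$ because association implies pairwise PQD (apply the association inequality to the nondecreasing indicators $f(x,y) = I_{\{x>a\}}$ and $g(x,y) = I_{\{y>b\}}$). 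Term-by-term comparison then gives \eqref{eq:1.1}, and Theorem 4 of Chen19 yields $\sum_{k=1}^{n}(X_{k} - \mathbb{E}\,X_{k})/n^{1/p} \to 0$ a.s.; identical distribution replaces $\mathbb{E}\,X_{k}$ by $\mathbb{E}\,X_{1}$.

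For the ``only if'' direction, I would apply Theorem~\ref{thr:1}. The assumed SLLN $\sum_{k=1}^{n}(X_{k} - \mathbb{E}\,X_{1})/n^{1/p} \to 0$ a.s. presupposes $\mathbb{E}\,X_{1}$ finite, so Theorem~\ref{thr:1} applies with $c = \mathbb{E}\,X_{1}$ (again using that association implies pairwise PQD) and produces $\sum_{k=1}^{\infty} \mathbb{P}\{\lvert X_{1} \rvert > k^{1/p}\} < \infty$. A standard tail-sum/Fubini argument, namely $\mathbb{E}\,\lvert X_{1} \rvert^{p} = \int_{0}^{\infty} \mathbb{P}\{\lvert X_{1} \rvert^{p} > t\}\, \mathrm{d}t$ sandwiched between $\sum_{k=0}^{\infty}\mathbb{P}\{\lvert X_{1} \rvert > k^{1/p}\}$ and $\sum_{k=1}^{\infty}\mathbb{P}\{\lvert X_{1} \rvert > k^{1/p}\}$, converts this tail-sum condition into $\mathbb{E}\,\lvert X_{1} \rvert^{p} < \infty$.

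I do not anticipate a genuine obstacle: the corollary is essentially an assembly of two already-proven statements, and the only subtle points are the comparison $(kj)^{-1/p} \geqslant j^{-2/p}$ that turns \eqref{eq:1.2} into \eqref{eq:1.1}, and the observation that association implies pairwise PQD (so that Theorem~\ref{thr:1} is applicable). Both are immediate.
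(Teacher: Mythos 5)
Your proposal is correct and follows essentially the same route as the paper: the ``if'' direction comes from noting that \eqref{eq:1.2} implies \eqref{eq:1.1} and invoking Theorem 4 of Chen and Sung, while the ``only if'' direction combines Theorem~\ref{thr:1} with the standard equivalence between $\sum_{k=1}^{\infty} \mathbb{P}\{\lvert X_{1}\rvert > k^{1/p}\} < \infty$ and $\mathbb{E}\lvert X_{1}\rvert^{p} < \infty$ (which the paper cites as Corollary 4.1.3 of Chow and Teicher and you verify directly). The extra details you supply --- the comparison $(kj)^{-1/p} \geqslant j^{-2/p}$ for $k < j$, the nonnegativity of $G_{X_{k},X_{j}}$ via pairwise PQD, and the reduction of stochastic domination to identical distribution --- are exactly the steps the paper leaves implicit.
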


\begin{proof}
Obviously, \eqref{eq:1.2} implies \eqref{eq:1.1} and one implication is a consequence of Theorem 4 in \citealt{Chen19}. The reciprocal follows from Theorem~\ref{thr:1} and Corollary 4.1.3 of \citealt{Chow97}. The proof is complete.
\end{proof}

Furthermore, we are also able to present the following equivalence result.

\begin{corollary}
Let $\{X_{n}, \, n \geqslant 1 \}$ be a sequence of identically distributed and pairwise PQD random variables satisfying $\sum_{1 \leqslant k < j \leqslant \infty} G_{X_{k},X_{j}}(k,j)/(kj) < \infty$. Then, $\mathbb{E} \lvert X_{1} \rvert < \infty$ if and only if $\sum_{k=1}^{n} (X_{k} - \mathbb{E} \, X_{1})/n \overset{\textnormal{a.s.}}{\longrightarrow} 0$.
\end{corollary}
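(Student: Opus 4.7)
The plan is to derive this corollary as the $p=1$ specialisation of the equivalence implicit in Theorem~\ref{thr:1}, combined with the sufficiency result already recorded in the introduction from Remark~1 of \citealt{Lita18b}.

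For the ``if'' direction, I would assume $\mathbb{E}\lvert X_{1}\rvert < \infty$ and reduce to the quoted sufficient condition. Identical distributions give $\mathbb{P}\{\lvert X_{n}\rvert > t\} = \mathbb{P}\{\lvert X_{1}\rvert > t\}$ for every $n$ and $t>0$, so $\{X_{n}\}$ is trivially stochastically dominated (with constant $C=1$) by $X_{1} \in \mathscr{L}_{1}$. Since $k \leqslant j$ in the index range of the double sum, the elementary monotonicity $(kj)^{-1} \geqslant j^{-2}$ yields $\sum_{1 \leqslant k < j \leqslant \infty} j^{-2} G_{X_{k},X_{j}}(k,j) \leqslant \sum_{1 \leqslant k < j \leqslant \infty} (kj)^{-1} G_{X_{k},X_{j}}(k,j) < \infty$. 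Invoking Remark~1 of \citealt{Lita18b} (as recalled right after \eqref{eq:1.1}), one obtains $\sum_{k=1}^{n}(X_{k} - \mathbb{E}\,X_{k})/n \overset{\textnormal{a.s.}}{\longrightarrow} 0$, and identical distributions replace $\mathbb{E}\,X_{k}$ by $\mathbb{E}\,X_{1}$.

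For the ``only if'' direction, I would apply Theorem~\ref{thr:1} with $p=1$ and $c = \mathbb{E}\,X_{1}$: the corollary's covariance hypothesis is exactly \eqref{eq:1.2} for $p=1$, so Theorem~\ref{thr:1} delivers $\sum_{k=1}^{\infty}\mathbb{P}\{\lvert X_{1}\rvert > k\} < \infty$. The standard tail--integral identity then furnishes $\mathbb{E}\lvert X_{1}\rvert = \int_{0}^{\infty}\mathbb{P}\{\lvert X_{1}\rvert > t\}\,\mathrm{d}t \leqslant 1 + \sum_{k=1}^{\infty}\mathbb{P}\{\lvert X_{1}\rvert > k\} < \infty$.

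There is no serious obstacle here; the only moving part worth checking is that the corollary's covariance hypothesis is strong enough to feed into the sufficient condition from \citealt{Lita18b}, which is immediate from the inequality $(kj)^{-1} \geqslant j^{-2}$ on the summation range. Everything else is a direct specialisation ($p=1$) of Theorem~\ref{thr:1} together with the tail--expectation identity for nonnegative random variables.
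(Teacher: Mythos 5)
Your proposal is correct and follows essentially the same route as the paper: sufficiency via Remark~1 of Lita da Silva (2018b) after noting that $(kj)^{-1}\geqslant j^{-2}$ for $k<j$ (together with $G_{X_k,X_j}\geqslant 0$ under PQD) reduces the corollary's hypothesis to the one quoted in the introduction, and necessity via Theorem~\ref{thr:1} with $p=1$ plus the tail-sum/moment equivalence (the paper cites Corollary~4.1.3 of Chow and Teicher for this last step). Your version merely spells out the comparison of covariance conditions and the stochastic-domination constant $C=1$, which the paper leaves implicit.
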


\begin{proof}
The sufficiency part is assured by Remark 1 of \citealt{Lita18b} (or Theorem 3 of \citealt{Chen19}). The necessity part is a consequence of Theorem~\ref{thr:1} and Corollary 4.1.3 of \citealt{Chow97}.
\end{proof}

\begin{acknowledgements}
This work is a contribution to the Project UIDB/04035/2020, funded by FCT - Funda\c{c}\~{a}o para a Ci\^{e}ncia e a Tecnologia, Portugal.
\end{acknowledgements}

\end{document}